\numberwithin{equation}{section}
\newtheorem{thm}[equation]{Theorem}
\newtheorem{example}[equation]{Example}
\theoremstyle{definition}
\newtheorem{remark}[equation]{Remark}
\newenvironment{rem}{\begin{remark}\rm}{\end{remark}}
\DeclareMathOperator{\Ext}{Ext}
\DeclareMathOperator{\Proj}{Proj}
\newcommand{\DOT}{*}
\newcommand{\coh}{{\rm H}}
\newcommand{\Z}{{\mathbb Z}}
\newcommand{\Hom}{\mbox{\rm Hom\,}}
\newcommand{\ra}{\rangle}
\newcommand{\la}{\langle}
\newcommand{\ot}{\otimes}
\newcommand{\p}{{\mathfrak {p}}}
\newcommand{\HH}{{\rm HH}}
\newcommand{\Mod}{\mathsf{Mod}}
\newcommand{\mmod}{\mathsf{mod}}
\newcommand{\V}{{\mathcal{V}}}
\newcommand{\StMod}{\mathsf{StMod}}    
\newcommand{\stmod}{\mathsf{stmod}}
\title[Examples of support varieties]
{Examples of support varieties for Hopf algebras
with noncommutative tensor products}
\author{Dave Benson}
\address{Institute of Mathematics, University of Aberdeen, King's College, 
Aberdeen AB24 3UE, Scotland, U.K.}
\author{Sarah Witherspoon}
\address{Department of Mathematics, Texas A\&M University, College Station,
Texas 77843, U.S.A.}
\date{22 August 2013}
\thanks{This material is based upon work supported by the National Science 
Foundation under Grant No.~0932078~000, while the authors were in 
residence at the Mathematical Science Research Institute (MSRI) in 
Berkeley, California, during the Spring semester of 2013.
The second author was supported by National Science Foundation Grant No.\
DMS-1101399.}
\begin{document}

\maketitle

\begin{abstract}
The representations of some Hopf algebras have  curious behavior: 
Nonprojective modules may have projective tensor powers, 
and the variety of a tensor product of modules
may not be  contained in the intersection of their varieties. 
We explain  a family of examples of such  Hopf algebras and their modules, 
and classify  left, right, and two-sided ideals in 
their stable module categories. 
\end{abstract}

\section{Introduction}

Over the past few decades,
the theory of support varieties has become one of the cornerstones of
the representation theory of finite groups. Its success has led to the
introduction of similar methods in the representation theory of restricted Lie algebras, 
finite group schemes, and complete intersections,
as well as in  various other branches of representation theory.

One of the main points of the theory is that projectivity 
(or, in some situations, finite projective dimension)
of a module can be detected through the support variety.
Complexity, a measure of the rate of growth of a
minimal resolution, may also be  detected.
In categories with tensor products, often the support of
a tensor product is the intersection of the supports,
allowing one to tell whether a tensor product of modules
is projective, and more generally to compute its complexity.

The purpose of this paper is to provide a family of examples of
finite dimensional Hopf algebras $A$ which are neither commutative
nor cocommutative, and for which the tensor product of modules
exhibits some unusual behaviour. Here are the main features of
this family of examples.

\begin{itemize}
\item If $M$ and $N$ are $A$-modules then 
$M\otimes N$ and $N \otimes M$ are generally not isomorphic.
\item The support of $M \otimes N$ need not be contained in
the intersection of the supports of $M$ and $N$.
\item One of the two modules $M\ot N$ and $N\ot M$ can be projective
while the other is not. 
\item There are examples of modules
$M$ which are not projective, but $M\otimes M$ is projective.
\item More generally, given $n>0$ there exists such an $A$ and $M$
with $M^{\otimes(n-1)}$ not projective but $M^{\otimes n}$ projective.
In these examples, the complexity of $M^{\otimes i}$ is equal
to $n-i$ for $1\le i\le n$; but other decreasing sequences of
complexities can be arranged at will.
\item Nonetheless there is a formula for the support 
of $M \otimes N$ in terms of the support of $M$ and of $N$.
\end{itemize}
Using the support, we classify the localising subcategories,
the left ideals, the right ideals and the two-sided ideals of
the stable module category $\StMod(A)$.

In more detail, let $G$ and $L$ be finite groups, together with
an action $L\to\mathop{\rm Aut}(G)$ of $L$ on $G$ by group automorphisms. 
Let $k$ be an algebraically closed field of characteristic $p$
dividing the order of $G$.
As an algebra, $A = kG \otimes k[L]$, the tensor product of the
group algebra of $G$ and the coordinate ring of $L$. The
coalgebra structure is dual to the algebra structure of the smash product 
$k[G]\# kL$ using the action of $L$ on $G$.

Since $k[L]$ is semisimple, its indecomposable representations are all 
one-dimensional, and correspond to the elements of $L$. We
write $k_\ell$ for the one dimensional representation corresponding
to $\ell\in L$. So every left $A$-module $M$ has a 
canonical decomposition
\[ M = \bigoplus_{\ell \in L} M_\ell \otimes k_\ell,\]
where $\ot = \ot_k$ and 
each $M_\ell$ is a $kG$-module. 

In terms of this decomposition,
the tensor product of modules is given by the following formula,
as we show in Section~\ref{GL}:
\[ (M \otimes N)_\ell = \bigoplus_{\ell_1\ell_2=\ell} M_{\ell_1} \otimes {^{\ell_1}}\!N_{\ell_2}, \]
where ${^{\ell_1}}\!N_{\ell_2}$ denotes the conjugate of the $kG$-module $N_{\ell_2}$ by
the action of the element $\ell_1$ on $G$. Similarly, the dual of
an $A$-module $M$ is shown in Section~\ref{GL} to be  given by
\[ (M^*)_\ell = {^{\ell^{-1}}}\!(M_{\ell^{-1}})^*. \]
These formulas lead to  
the examples in Section~\ref{support} of $A$-modules whose
tensor products have such curious behavior in comparison to better-known settings. 
In Section~\ref{localising}
we classify ideal subcategories of the stable module category
of all $A$-modules.

\section{A noncommutative tensor product}\label{GL}

We give first the Hopf algebra structure of $A$ explicitly,
and then some immediate consequences for its representations. 
The action of $L$ on $G$ induces an action 
on  $k[G]:= \Hom_k (kG,k)$. 
Let  $k[G]\# kL$ (or $k[G]\rtimes L$)
denote the resulting smash (or semidirect) product:
As a vector space, it is $k[G]\ot kL$, and multiplication is given by
$(p_g\ot x) (p_h\ot y) =  \delta_{g, {^x\! h}} \ p_g \ot xy$
for all $g,h\in G$ and $x,y\in L$, where $\{p_g \mid g\in G\}$ is
the basis of $k[G]$ dual to $G$, $\delta$ denotes the Kronecker delta,
and $ ^x\! h$ is the image of $h$ under the action of $x$. 
The algebra $k[G]\# kL$
is a Hopf algebra with the tensor product coalgebra structure. Let
$$
  A   := \Hom_k (k[G]\# kL , k),
$$
the Hopf algebra dual to $k[G]\# kL$.
Then $A$ has  the tensor product algebra structure, $A = kG\ot k[L]$.
The comultiplication, dual to multiplication on $k[G]\# kL$, is given by
$$
  \Delta(g\ot p_{\ell}) = \sum _{x\in L} (g\ot p_x)\ot ( {^{x^{-1}}\! g}\ot
   p_{x^{-1}\ell})
$$
for all $g\in G$, $\ell\in L$. 
The counit and coinverse are  determined by
$$
   \varepsilon(g\ot p_{\ell})= \delta_{1,\ell} \ \ \mbox{ and } \ \ 
   S(g\ot p_{\ell})=  \ ^{\ell^{-1}}\! (g^{-1})\ot p_{\ell^{-1}}.
$$
Note that $S^2$ is the identity map on $A$.

If $M,N$ are $A$-modules, then $M\ot N$ is an $A$-module via the
coproduct $\Delta$. The $k$-linear dual $M^*:=\Hom_k(M,k)$ 
is an $A$-module via the coinverse $S$:
$\ (a\cdot f) (m) =  f(S(a)m)$ for all $a\in A$, $f\in M^*$, $m\in M$.

\begin{thm}\label{tp-formula}
Let $M,N$ be $A$-modules, and $\ell \in L$. Then 
\[ (M\ot N)_{\ell} = \bigoplus_{ \ell_1\ell_2=\ell} M_{\ell_1}\ot {^{\ell_1}\! N_{\ell_2}}\]
and $(M^*)_{\ell}= {^{\ell^{-1}}\! (M_{\ell^{-1}})}^*$. 
\end{thm}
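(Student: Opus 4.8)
The plan is to exploit the observation that, since $k[L]$ is commutative and commutes with $kG$ inside $A=kG\otimes k[L]$, each element $1\otimes p_\ell$ is a central idempotent of $A$, and the summand $M_\ell\otimes k_\ell$ of an $A$-module $M$ is precisely its image $(1\otimes p_\ell)M$. Thus $M_\ell$ is recovered as the $kG$-module $(1\otimes p_\ell)M$, on which $g\in G$ acts through $g\otimes p_\ell$ --- equivalently through $g\otimes 1=\sum_{x\in L}g\otimes p_x$, since the two agree on this summand. Both formulas then follow by evaluating the given comultiplication $\Delta$ and antipode $S$ on the elements $1\otimes p_\ell$ and $g\otimes p_\ell$ and collecting Kronecker deltas; a convenient preliminary, itself a one-line check, is the identification $k_{\ell_1}\otimes k_{\ell_2}\cong k_{\ell_1\ell_2}$ of $A$-modules.

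For the tensor product I would proceed in two steps. First, apply $\Delta(1\otimes p_\ell)=\sum_{x\in L}(1\otimes p_x)\otimes(1\otimes p_{x^{-1}\ell})$ to $m\otimes n$ with $m\in M_{\ell_1}$ and $n\in N_{\ell_2}$: since $(1\otimes p_x)m=\delta_{x,\ell_1}m$ and $(1\otimes p_{x^{-1}\ell})n=\delta_{x^{-1}\ell,\ell_2}n$, only the term $x=\ell_1$ survives, and it is nonzero exactly when $\ell_1\ell_2=\ell$. Summing over $\ell_1,\ell_2$ identifies $(M\otimes N)_\ell$ as a vector space with $\bigoplus_{\ell_1\ell_2=\ell}M_{\ell_1}\otimes N_{\ell_2}$. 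Second, apply $\Delta(g\otimes p_\ell)=\sum_{x\in L}(g\otimes p_x)\otimes({}^{x^{-1}}g\otimes p_{x^{-1}\ell})$ to the same $m\otimes n$; again only $x=\ell_1$ contributes, leaving $(g\cdot m)\otimes({}^{\ell_1^{-1}}g\cdot n)$. Hence on the $(\ell_1,\ell_2)$ summand the second factor carries the twisted action $g\mapsto{}^{\ell_1^{-1}}g$ on $N_{\ell_2}$, which is the conjugate module ${}^{\ell_1}N_{\ell_2}$, giving the claimed formula.

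The dual is handled the same way with $S$ in place of $\Delta$. From $S(1\otimes p_\ell)={}^{\ell^{-1}}(1^{-1})\otimes p_{\ell^{-1}}=1\otimes p_{\ell^{-1}}$ one gets $((1\otimes p_\ell)f)(m)=f\big((1\otimes p_{\ell^{-1}})m\big)$, so $(M^*)_\ell$ is the space of functionals supported on $M_{\ell^{-1}}$, identified by restriction with $(M_{\ell^{-1}})^*$ as a vector space. From $S(g\otimes p_\ell)={}^{\ell^{-1}}(g^{-1})\otimes p_{\ell^{-1}}$ one then computes that $g$ acts on this space by $(g\cdot f)(m)=f\big({}^{\ell^{-1}}(g^{-1})\cdot m\big)$ for $m\in M_{\ell^{-1}}$; rewriting ${}^{\ell^{-1}}(g^{-1})=({}^{\ell^{-1}}g)^{-1}$ exhibits $(M^*)_\ell$ as the conjugate ${}^{\ell^{-1}}(M_{\ell^{-1}})^*$ of the ordinary dual.

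The underlying computations are each essentially a single line; the only thing that needs care is the bookkeeping --- keeping straight which element of $A$ implements the $kG$-action on each block, and fixing once and for all the convention for the conjugate module ${}^\ell W$ (twisting by the automorphism $\ell$ versus by its inverse) and tracking the resulting inverses, since $\Delta$ and $S$ naturally produce terms of the shape ${}^{x^{-1}}g$. This is the main, and indeed the only, obstacle.
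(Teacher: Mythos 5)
Your proposal is correct and takes essentially the same approach as the paper: both proofs reduce to evaluating the explicit formulas for $\Delta$ and $S$ on the $\ell$-graded pieces, first matching up underlying vector spaces (the paper via the explicit bijections $\phi$ and $\psi$, you via the central idempotents $1\otimes p_\ell$) and then tracking the $kG$-action coming from $g\otimes p_\ell$ to read off the twist. Your reorganization around the idempotents is a clean way to package the same computation, and you correctly identify the single delicate point, namely pinning down the convention for the conjugate module ${}^\ell W$ so that the $x^{-1}$-twists produced by $\Delta$ and $S$ land on the right side.
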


\begin{proof}
It suffices to prove the statement componentwise. Let $y,z\in L$. 
Define a $k$-linear map $\phi: (M_y\ot k_y)\ot (N_z\ot k_z)\rightarrow
(M_y\ot  {^y\! N_z})\ot k_{yz}$ by
$$
   \phi( (m\ot p_y)\ot (n\ot p_z)) = (m\ot n) \ot p_{yz}
$$
for all $m\in M_y$, $n\in N_z$.
Clearly $\phi$ is a bijection. It is also an $A$-module homomorphism:
Let $g\in G$, $\ell\in L$.
Applying the coproduct $\Delta$ to $g\ot p_{\ell}$, we find 
\begin{eqnarray*}
\phi((g\ot p_{\ell})((m\ot p_y)\ot (n\ot p_z)) & = &
    \delta_{\ell, yz} \ \phi((gm\ot p_y)\ot ( ({^{y^{-1}}\! g})n\ot p_z)\\
   &=& \delta_{\ell, yz} \ (gm\ot ({^{y^{-1}}\! g})n)\ot p_{yz} .
\end{eqnarray*}
On the other hand,
\begin{eqnarray*}
(g\ot p_{\ell})\phi((m\ot p_y)\ot (n\ot p_z) ) &=&
   (g\ot p_{\ell}) ((m\ot n)\ot p_{yz})\\
   &=& \delta_{\ell, yz} \ (gm\ot ({^{y^{-1}}\! g})n)\ot p_{yz}.
\end{eqnarray*}
 
Let $\psi:  {^{y^{-1}}\! (M_{y^{-1}})^*} \ot k_{y^{-1}} \rightarrow (M_y\ot k_y)^*$
be the $k$-linear map defined by
$$
   \psi( f\ot p_{y^{-1}}) =  \widetilde{ f}
$$
where $\widetilde{ f} (m\ot p_y) = f( m)$ for all
$m\in M_y$ and $y\in L$.
Clearly $\psi$ is a bijection. It is also an $A$-module homomorphism:
Let $g\in G$ and $\ell\in L$. Then
$$
  \psi( (g\ot p_{\ell})(f\ot p_{y^{-1}})) \ = \ \delta_{\ell , y^{-1}} \ 
  \psi( {^{^{y}g}}f\ot p_{y^{-1}}) \ = \ \delta_{\ell, y^{-1}} \ \widetilde{
      {^{{^y\! g}} f} }
$$
where $\widetilde{ {^{ {^y\! g}}\! f}} (m\ot p_y) = {^{{^y\! g}}\! f}(m) = 
f(({^y\! g})^{-1}m)$.
On the other hand,
$$
  (g\ot p_{\ell}) (\psi(f\ot p_{y^{-1}})) \ = \ (g\ot p_{\ell})( 
   \widetilde{ f})
$$
where
\begin{eqnarray*}
    ((g\ot p_{\ell})( \widetilde{ f})) (m\ot p_y) &=&
   \sum_{x\in L} (g\ot p_x) ( \widetilde{  f}(S( {^{x^{-1}}\! g}\ot 
    p_{x^{-1}\ell}) (m\ot p_y)))\\
      &=& (g\ot p_{y\ell}) \widetilde{  f}( {^{\ell^{-1}}\! (g^{-1})}m
   \ot p_y)
    \ \ = \ \        \delta_{\ell,y^{-1}} f( ({^y\! g})^{-1}m).
\qedhere
\end{eqnarray*}
\end{proof}

\section{Support varieties of finite dimensional modules}\label{support}

We recall a definition of  support varieties for finite dimensional modules,
adapted from Snashall and Solberg~\cite{SS}. 
(A more general definition, suitable for infinite dimensional modules, 
will be recalled in the next section.) 
We illustrate with examples some ways in which 
their behavior is different from  the cocommutative case. 
For this purpose, {\em assume from now on that $G$
is a $p$-group}.
(The more general case is slightly more complicated.) 
Recall that the group cohomology ring, 
$H^*(G,k):=\Ext^*_{kG}(k,k)$, may be regarded as a quotient of
the Hochschild cohomology ring, $\HH^*(kG):=\Ext^*_{kG\ot (kG)^{op}}
(kG,kG)$, by a nilpotent ideal, and
the quotient map is split by the canonical 
inclusion $H^*(G,k)\to\HH^*(kG)$ (see, e.g., \cite[Theorem 10.1]{Siegel/Witherspoon1999}).
It follows that the tensor product 
$H^*(G,k) \otimes k[L]$ may also be regarded as a quotient of
$\HH^*(A)\cong \HH^*(kG) \otimes k[L]$
by a nilpotent ideal, and the quotient map is split by the canonical 
inclusion $H^*(G,k)\otimes k[L]\to\HH^*(A)$.

For finite dimensional $A$-modules $M$ and $N$, 
consider the action of the  Hochschild cohomology ring
$\HH^*(A)$  on $\Ext_A^{\DOT}(M,N)$ given by $-\ot_A M$ followed by Yoneda composition.
By restriction we obtain an action of $\coh^*(G,k)\ot k[L]$, and it is this
action that we choose to define support varieties. 
Let $I_A(M)$ denote the annihilator in $\coh^*(G,k)\ot k[L]$ of this action
on $\Ext^*_A(M,M)$.
The {\em support variety} of $M$ is 
$$
   V_A(M) := \Proj ( \coh^*(G,k)\ot k[L] /I_A(M)),
$$
where $\Proj$ denotes the space of 
homogeneous prime ideals other than the maximal ideal of positive
degree elements. 
These varieties have standard properties as proved in \cite{EHSST,SS}.
For example, 
a finite dimensional  $A$-module $M$ is projective if, and only if, its support
variety is empty, and more generally the 
dimension  of $V_A(M)$ is the complexity of $M$ (the rate
of growth of a minimal projective resolution).

\begin{rem}
Our definition of support variety is equivalent to that of Snashall
and Solberg \cite{SS} since we have assumed $G$ is a $p$-group. 
It differs from that of Feldvoss and the second author \cite{FW},
since the cohomology $\coh^*(A,k):= \Ext^*_A(k,k)$ is isomorphic to $\coh^*(G,k)$,
and not to $\coh^*(G,k)\ot k[L]$.
It  has an advantage over
the latter in that it remembers more information about an $A$-module. 
\end{rem} 

The support variety $V_A := V_A(k)$ has the form 
\[ \Proj (H^*(G,k) \otimes k[L])=V_G \times L. \]
We write $V_{G,\ell}(M)$ for $V_G(M_\ell)$, the support variety of the $kG$-module $M_{\ell}$. Then 
\[ V_A(M)=\coprod_{\ell\in L}V_{G,\ell}(M) \times \ell \subseteq V_G\times L. \]
For each $\ell$, the variety $V_G(M_\ell)$ is the collection of primes 
containing the kernel of the map from
$H^*(G,k)$ to  $\Ext^*_{kG}(M_\ell,M_\ell)$ given by $ - \ot M_{\ell}$.

Our formula for the support of a tensor product follows directly
from the tensor product formula of Theorem~\ref{tp-formula} for
$A$-modules and properties
of support varieties for $kG$-modules:
\[ V_{G,\ell}(M \otimes N) =
\bigcup_{\ell_1\ell_2=\ell}V_{G,\ell_1}(M)\cap {^{\ell_1}}\!V_{G,\ell_2}(N). \]

\begin{example}\label{klein}
{\em 
Let $G$ be the Klein 4-group with nonidentity elements $a,b,c$, and 
let $L$ be the cyclic group of order 3 with generator $\ell$, permuting
$a,b,c$ cyclically.
We work over a field $k$ of characteristic 2.
Let $U$ be the $kG$-module given by the quotient of the left
regular module $kG$ by the ideal generated by $a-1$.
Let $A=kG\ot k[L]$, and consider the following $A$-modules: 
$\ M=U\ot k_1$ and $N=k\ot k_{\ell}$.
The support varieties of $M\ot N$ and $N\ot M$ are 
\begin{eqnarray*}
     V_A(M\ot N) & = & 
      V_{G}(  U)\times \ell , \ \mbox{ and}\\
  V_A(N\ot M) &=& V_A(\hspace{.05cm}  {^{\ell}\! U} \ot k_{\ell}) \ \ = \ \
   {^{\ell}\! V_{G}( U)}\times \ell \ \ \neq \ \ V_{A}(M\ot N),
\end{eqnarray*}
since $V_G(U)\neq { ^{\ell}\! V_{G}(U)}$. 
For comparison, note that $V_A(M)\cap V_A(N) = \varnothing$.
We thus see that 
the variety of the tensor product depends on the order and 
is} not {\em contained in the intersection of the varieties.
The same is true even if we use the version of support varieties  in \cite{FW},
contradicting Proposition~2.4(5) in that paper; see
\cite{FW-E} for a correction. 

By modifying these modules, we obtain an example where $M\ot N$ is
projective while $N\ot M$ is not:
Let $M = U\ot k_{\ell}$ and $N=U\ot k_1$.
Then $M\ot N\cong (U\ot {^{\ell}\! U}) \ot k_{\ell}$, which has empty variety,
and so is projective. On the other hand, $N\ot M \cong (U\ot U)\ot k_{\ell}$,
which has (nonempty) variety corresponding to that of $U$. 

Finally, if $M= U\ot k_{\ell}$ as before, then
$$
   V_A(M\ot M) = V_{G} (U\ot {^{\ell}\! U})\times \ell^2 =
   (V_{G}(U)\cap V_{G}( {^{\ell}\! U}))\times \ell^2 = \varnothing .
$$
Therefore $M\ot M$ is projective while $M$ is not.  
Moreover, $V_A(M^*) = V_G( ^{\ell}\! U^*) \times \ell^{-1}$,
which differs from $V_A(M)$
 (and their 
support varieties as defined in \cite{FW}  also differ from each other). 
}
\end{example}

The projectivity of the second tensor power
of a nonprojective module is generalized in the next example.

\begin{example}{\em
Let $k$ be a field of positive characteristic $p$ and let
$n\geq 2$ be a positive integer.
Let  $G = (\Z/p\Z)^n$, generated by
$g_1,\ldots,g_n$, and 
$L= \Z/n\Z$, with generator $\ell$, acting on $G$ by cyclically permuting
these generators. 
Let $U$ be the $kG$-module that is the quotient of $kG$ by the ideal 
generated by $g_1-1, \ldots, g_{n-1} -1$, equivalently $U$ is the trivial
module induced from the subgroup $ \la g_1,\ldots, g_{n-1}\ra$. 
Then $$V_{G}( U\ot {^{\ell}\! U}\ot\cdots\ot {^{\ell^{n-2}}\! U})
= V_{G}( U)\cap V_{G}( {^{\ell}\! U})\cap \cdots
\cap V_{G}( {^{\ell ^{n-2}}\! U}) 
\neq \varnothing ,$$ 
while 
$$V_{G}(U\ot {^{\ell}\! U}\ot  \cdots \ot {^{\ell^{n-1}}\! U}) = \varnothing .
$$
Therefore, letting  $A=kG\ot k[L]$ and $M=U\ot k_{\ell}$, 
we have
$$
   V_A(M^{\ot (n-1)})\neq \varnothing \ \ \ \mbox{ and } \ \ \
   V_A(M^{\ot n}) =\varnothing .
$$
As a consequence, $M^{\ot n}$ is projective while $M^{\ot (n-1)}$ is not projective.
(Compare with \cite[Theorem~6.1]{DP}, in which a similar phenomenon occurs in the
unbounded derived category of a non-Noetherian commutative ring.)
}
\end{example}

\section{Localising subcategories}\label{localising}

Next, we turn to applications of support theory to the structure
of the 
stable module category $\StMod(A)$, a triangulated category 
whose objects are all $A$-modules
(including infinite dimensional ones),
and morphisms are all $A$-module homomorphisms modulo those factoring through
projective modules.  
For our purposes in this section, we require a version
of support varieties that works well for infinitely generated
modules and agrees with the definition given in the last
section for finitely generated modules. 

Set $R = \coh^*(G,k)\otimes k[L]$.
Recall that there is a canonical inclusion of $R$ into the
Hochschild cohomology ring $\HH^*(A)$.
Composing this with the natural map from $\HH^*(A)$ to the
graded center $Z^*(\StMod(A))$,
we obtain an action of $R$ on $\StMod(A)$ in the sense of 
\cite{Benson/Iyengar/Krause:2008a} by the first author, Iyengar, and Krause.
As in Section 5 of \cite{Benson/Iyengar/Krause:2008a}, 
we obtain local cohomology functors
$\Gamma_\p\colon \StMod(A)\to\StMod(A)$ for each $\p$ in the
homogeneous prime ideal spectrum of $R$. We have $\Gamma_\p \ne 0$ if and
only if $\p$ is not a maximal ideal. We define $\V_A$
to be the projective spectrum of nonmaximal homogeneous prime ideals
of $R$, and we define the {\em support} of an $A$-module $M$ to be 
 \[\V_A(M) = \{\p \mid \Gamma_\p(M) \ne 0\}.\]
In case $M$ is finite dimensional, $\mathcal V_A(M)$ coincides with
the set of nonmaximal homogeneous primes containing $I_A(M)$,
and therefore it determines and is determined by $V_A(M)$.

A {\em left ideal localising subcategory} of $\StMod(A)$ is a full
triangulated subcategory that is closed under taking direct summands,
arbitrary direct sums, and tensoring on the left by objects of 
$\StMod(A)$. (In fact, the latter two properties imply the first;
see  \cite{Benson/Iyengar/Krause:2011b}.)
Similarly define {\em right} and {\em two-sided ideal localising subcategories}. 

The main theorem of \cite{Benson/Iyengar/Krause:2011b} implies that 
if we regard $\StMod(kG)$ as a tensor triangulated category, it
is stratified by the action of $H^*(G,k)$. Loosely speaking, 
this means that the tensor ideal localising subcategories are
classified in terms of subsets of the variety $\V_G=\Proj H^*(G,k)$,
the projective spectrum of homogeneous
prime ideals of the cohomology ring $H^*(G,k)$, but not
including the maximal ideal of positive degree elements.
If we forget the tensor structure, this is still true provided 
that $G$ is a finite $p$-group, but not more generally.

Next we describe the classification theorem for tensor ideals
in $\StMod(A)$.
Since the tensor product formula in Theorem~\ref{tp-formula} gives
\[ (k\otimes k_{\ell_1}) \otimes (M_\ell \otimes k_\ell) \otimes (k
\otimes k_{\ell_2^{-1}}) = {^{\ell_1}}M_\ell \otimes
k_{\ell_1\ell\ell_2^{-1}}, \]
we introduce an action of $L\times L$ on $\V_G\times L$ as follows.
\[ (\ell_1,\ell_2)\colon (x,\ell) \mapsto
({^{\ell_1}}x,\ell_1\ell\ell_2^{-1}). \]

The next theorem is a straightforward consequence of the tensor product
formula  in Theorem~\ref{tp-formula} and 
the result  
\cite[Theorem~10.3]{Benson/Iyengar/Krause:2011b} of the first author,
Iyengar and Krause in the finite group setting. 

\begin{thm}
Let $A$ be the Hopf algebra described above. Then the 
theory of support gives the following bijections:
\begin{itemize}
\item[\rm (i)] The left ideal localising subcategories of $\StMod(A)$
and the subsets of the set of orbits of $L \times 1$ on $\V_G \times L$.
\item[\rm (ii)] The right ideal localising subcategories of
  $\StMod(A)$
and the subsets of the set of orbits of $1\times L$ on $\V_G \times L$.
\item[\rm (iii)] The two-sided ideal localising subcategories of
$\StMod(A)$ and the subsets of the set of orbits of $L\times L$ on $\V_G\times L$. 
\end{itemize}
\end{thm}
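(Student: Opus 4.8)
The plan is to reduce the classification for $\StMod(A)$ to the known stratification of $\StMod(kG)$ by $H^*(G,k)$ via the ``local'' structure afforded by the idempotents $p_\ell$. First I would exploit the canonical decomposition $M = \bigoplus_{\ell\in L} M_\ell\otimes k_\ell$ to identify $\StMod(A)$ with a kind of $L$-graded or ``twisted'' product of copies of $\StMod(kG)$: the component functors $M\mapsto M_\ell$ are exact, send projectives to projectives, and jointly reflect isomorphisms, so a localising subcategory $\mathcal C$ of $\StMod(A)$ is determined by the family of full subcategories $\mathcal C_\ell\subseteq\StMod(kG)$ it cuts out in each slot. The tensoring operations $k\otimes k_{\ell_1}$ on the left and $k\otimes k_{\ell_2^{-1}}$ on the right are invertible objects (their tensor inverses are $k\otimes k_{\ell_1^{-1}}$ and $k\otimes k_{\ell_2}$), and by the displayed identity
\[ (k\otimes k_{\ell_1}) \otimes (M_\ell\otimes k_\ell)\otimes(k\otimes k_{\ell_2^{-1}}) = {}^{\ell_1}M_\ell\otimes k_{\ell_1\ell\ell_2^{-1}} \]
they act on the indexing set $L$ (shifting $\ell\mapsto\ell_1\ell\ell_2^{-1}$) while twisting the $kG$-module in the slot by the automorphism ${}^{\ell_1}(-)$. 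Thus closure of $\mathcal C$ under left tensoring forces compatibility of the $\mathcal C_\ell$ across the $L\times 1$-orbits of slots, and similarly on the right; a two-sided ideal must be closed under both.

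Next I would pin down what each slot subcategory can be. Closure of a left (or right, or two-sided) ideal localising subcategory under arbitrary direct sums, summands, and triangles makes each $\mathcal C_\ell$ a localising subcategory of $\StMod(kG)$ which is moreover closed under the twist functors ${}^{\ell_1}(-)$ for those $\ell_1$ fixing the relevant slot — but since these twists are induced by algebra automorphisms of $kG$, they permute $\V_G=\Proj H^*(G,k)$, and closure under all of $\StMod(A)$-tensoring on the appropriate side translates (using the main classification of \cite[Theorem 10.3]{Benson/Iyengar/Krause:2011b} for finite $p$-groups, which says localising subcategories of $\StMod(kG)$ correspond bijectively to subsets of $\V_G$) into the statement that $\mathcal C_\ell$ corresponds to a subset $W_\ell\subseteq\V_G$, with the $W_\ell$ constrained by the $L\times 1$- (resp.\ $1\times L$-, resp.\ $L\times L$-) action on $\V_G\times L$. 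Concretely: the data of a left ideal localising subcategory is exactly a subset of $\V_G\times L$ stable under $L\times 1$, equivalently a union of $L\times 1$-orbits, equivalently an arbitrary subset of the orbit set; and symmetrically for the other two cases. I would then exhibit the two mutually inverse assignments — from $\mathcal C$ take $\bigcup_\ell \V_G(\mathcal C_\ell)\times\ell$ (the support), and from an orbit-closed subset $W\subseteq\V_G\times L$ take the full subcategory of $A$-modules $M$ with $\V_A(M)\subseteq W$ — and check they are inverse, using that support detects membership slotwise because it does so for $kG$ by stratification.

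The main obstacle, I expect, is the bookkeeping that turns ``closed under tensoring on the left by \emph{every} object of $\StMod(A)$'' into ``the $W_\ell$ are constant on $L\times 1$-orbits and otherwise unconstrained'': one direction (a left ideal is orbit-closed) follows quickly from the displayed tensoring formula applied with $M = k\otimes k_{\ell_1}$, but the converse — that \emph{every} orbit-closed family of subsets actually arises from a genuine left ideal localising subcategory, and that no further relations among the $W_\ell$ are forced — requires knowing that tensoring on the left by a general $A$-module $P=\bigoplus P_m\otimes k_m$ does not mix slots in an uncontrolled way, which again comes down to the tensor product formula: $(P\otimes M)_\ell=\bigoplus_{m m'=\ell}P_m\otimes{}^{m}M_{m'}$, so each slot of the output is a sum of $kG$-modules each supported inside $\bigcup_m {}^{m}\V_G(\mathcal C_{m^{-1}\ell})$, which lies in the $L\times 1$-orbit closure of $\bigcup_{\ell'}\V_G(\mathcal C_{\ell'})\times\ell'$ as required. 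Once this is set up carefully the three parts (i), (ii), (iii) are entirely parallel — left tensoring twists the $kG$-module and shifts $\ell$ on the left, right tensoring shifts $\ell$ on the right without twisting (the twist on the right factor gets absorbed, as the displayed formula shows with $\ell_1=1$), and two-sided ideals see both — so I would prove (i) in detail and indicate the obvious modifications for (ii) and (iii).
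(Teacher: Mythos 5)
Your argument is correct and is essentially the approach the paper intends: the paper gives this theorem as a one-line reduction to the tensor product formula of Theorem~\ref{tp-formula} and \cite[Theorem~10.3]{Benson/Iyengar/Krause:2011b}, and your proposal fills in exactly that reduction. One small correction to the justification rather than the substance: the fact that a localising subcategory $\mathcal C\subseteq\StMod(A)$ decomposes as the product of its slot subcategories $\mathcal C_\ell$ does not follow from the component functors jointly reflecting isomorphisms; rather, it follows because each $M_\ell\otimes k_\ell$ is a direct summand of $M$ (cut out by the central idempotent $1\otimes p_\ell$), and localising subcategories are closed under direct summands.
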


In each case the correspondence takes a set of orbits to the full subcategory
of modules whose support lies in one of the orbits in the set,
and it takes a subcategory to the union of the supports of its objects. 

\begin{remark}
Abstractly, the orbits in (i) and (ii) both give copies of $\V_G$,
but the left ideals are not the same as the right ideals. The
orbits in (iii) give the quotient $\V_G/L$, namely the projective spectrum
of the fixed points $H^*(G,k)^L$. 
\end{remark}

There are analogous statements for the category $\Mod(A)$ of all $A$-modules,
an application of \cite[Theorem~10.4]{Benson/Iyengar/Krause:2011b} for finite groups: 
The same sets of orbits classify non-zero tensor ideal localising
subcategories of $\Mod(A)$. (A full subcategory $\mathcal C$ of $\Mod(A)$ is
localising if direct sums and direct summands of modules in $\mathcal C$
are also in $\mathcal C$ and if any two modules in an exact sequence
of $A$-modules are in $\mathcal C$, then so is the third.
It is a tensor ideal if it is closed under tensor products with modules in $\Mod(A)$.) 
Similary, one obtains analogous classification results for the categories
of finite dimensional modules, $\mmod(A)$ and $\stmod(A)$.


\begin{thebibliography}{99}



\bibitem{Benson/Iyengar/Krause:2008a}
D.~J. Benson, S.~B. Iyengar, and H.~Krause, \emph{{Local cohomology and support
  for triangulated categories}}, Ann.\ Scient.\ \'Ec.\ Norm.\ Sup.\ (4)
  \textbf{41} (2008), 575--621.

\bibitem{Benson/Iyengar/Krause:2011b}
\bysame, \emph{{\null Stratifying modular representations of finite groups}},
  Ann.\ of Math. \textbf{174} (2011), 1643--1684.



\bibitem{DP} W.\ G.\ Dwyer and J.\ H.\ Palmieri,
\emph{The Bousfield lattice for truncated polynomial algebras},
Homology Homotopy Appl.\  {\textbf 10} (2008), no.\ 1, 413--436. 

\bibitem{EHSST}
K.\ Erdmann, M.\ Holloway, N.\ Snashall, \O.\ Solberg, and R.\ Taillefer,
\emph{Support varieties for selfinjective algebras},
K-Theory {\textbf 33} (2004), no.\ 1, 67--87.

\bibitem{FW} J.\ Feldvoss and S.\ Witherspoon,
\emph{Support varieties and representation type of small quantum groups},
Int.\ Math.\ Res.\ Not.\  (2010), no.\ 7, 1346--1362. 

\bibitem{FW-E} 
\bysame, \emph{Erratum to: Support varieties and representation type of small
quantum groups}, 
arxiv:1308.4928. 


\bibitem{Siegel/Witherspoon1999} S.\ F.\ Siegel and S.\ Witherspoon,
\emph{The Hochschild cohomology ring of a group algebra},
Proc.\ London Math.\ Soc.\ \textbf{79} (1999), 131--157. 

\bibitem{SS}
N.\ Snashall and \O.\ Solberg,
\emph{Support varieties and Hochschild cohomology rings},
Proc.\ London Math.\ Soc.\ (3) \textbf{88} (2004), no.\ 3, 705--732.


\end{thebibliography}
\end{document}